\numberwithin{equation}{section}
\numberwithin{figure}{section}
\def\R{\mathbb{R}}
\def\Z{\mathbb{Z}}
\def\1{\mathds{1}}
\def\dH{\dim_{\mathcal{H}}}
\renewcommand\leq{\leqslant}
\renewcommand\geq{\geqslant}
\renewcommand\hat{\widehat}
\theoremstyle{plain}
\newtheorem{thm}{Theorem}[section]
\newtheorem{lem}[thm]{Lemma}
\newtheorem{prop}[thm]{Proposition}
\newtheorem*{claim*}{Claim}
\newtheorem*{thm*}{Theorem}
\theoremstyle{definition}
\newtheorem*{definition*}{Definition}
\newtheorem*{remarks*}{Remarks}
\newtheorem*{remark*}{Remark}
\newtheorem{remark}[thm]{Remark}
\newenvironment{enumerate-math}
{\begin{enumerate}
\addtolength{\itemsep}{5pt}
}
{\end{enumerate}}
\newenvironment{enumerate-text}
{\begin{enumerate}
\addtolength{\itemsep}{5pt}
}
{\end{enumerate}}
\begin{document}

\title{Hausdorff dimension of pinned distance sets and the $L^2$-method}

\author{Bochen Liu}
\address{Department of Mathematics, the Chinese University of Hong Kong, Shatin, N.T., Hong Kong}
\email{Bochen.Liu1989@gmail.com}

\thanks{The author is supported by the grant CUHK24300915 from the Hong Kong Research Grant Council}
\date{}


\begin{abstract}
	 We prove that for any compact set $E\subset \mathbb{R}^2$, $\dim_{\mathcal{H}}(E)>1$, there exists $x\in E$ such that the Hausdorff dimension of the pinned distance set
	 $$\Delta_x(E)=\{|x-y|: y \in E\}$$
	 is no less than $\min\left\{\frac{4}{3}\dim_{\mathcal{H}}(E)-\frac{2}{3}, 1\right\}$. This answers a question recently raised by Guth, Iosevich, Ou and Wang, as well as improves results of Keleti and Shmerkin.


\end{abstract}
\maketitle

\section{Introduction}
\subsection{Falconer distance conjecture and pinned distance problem}
Falconer distance conjecture \cite{Fal85} is one of the most famous open problems in geometric measure theory, which states that for any compact set $E\subset\R^d$, $d\geq 2$, $\dH(E)>\frac{d}{2}$, its distance set
$$\Delta(E)=\{|x-y|:x,y\in E\}$$
has positive Lebesgue measure. 

Throughout this paper we use $\dH$ to denote Hausdorff dimension. Also dimension refers to Hausdorff dimension unless stated otherwise. 

A stronger version of Falconer distance conjecture is the pinned distance problem, which asks whether there exists $x\in E$ such that the pinned distance set
$$\Delta_x(E)=\{|x-y|:y\in E\} $$
has positive Lebesgue measure.
\subsection{The $L^2$-method}
One direction to study these problems is to investigate how large $\dH(E)$ needs to be to ensure that $\Delta(E)$, $\Delta_x(E)$ have positive Lebesgue measure. In this paper we focus on the pinned version. In fact the best currently known dimensional exponents on distances and pinned distances match.

Given a probability measure $\mu_E$ on $E$, one can define a natural measure $\nu_x$ on $\Delta_x(E)$ by
$$\int f(t)\,d\nu_x(t)= \int f(|x-y|)\,d\mu_E(y).$$
Equivalently, $\nu_x=d^x_*(\mu_E)$, where $d^x(y)=|x-y|$. 

To show the support of $\nu_x$ has positive Lebesgue measure, it suffices to show the Radon-Nikodym derivative $\frac{d\,\nu_x}{d\,t}\in L^p$ for some $p>1$. When $p=2$, the author \cite{Liu18} discovered the following identity,
\begin{equation}
	\label{L2-id}
	\int_0^\infty |f*\omega_t(x)|^2\,t^{d-1}\,dt = \int_0^\infty |f*\widehat{\omega_r}(x)|^2\,r^{d-1}\,dr,
\end{equation}
for any Schwartz function $f$ on $\R^d$ and any $x\in\R^d$. Here $\omega_r$ denotes the normalized surface measure on $r S^{d-1}$. It implies that, to show $\frac{d\,\nu_x}{d\,t}\in L^2(t^{d-1}\,dt)$ for $\mu_E$-a.e. $x\in E$, it suffices to show
\begin{equation}
	\label{L2-id-mu}
	\iint |\mu_E*\widehat{\omega_r}(x)|^2\,r^{d-1}\,dr\,d\mu_E(x)
\end{equation}
is finite, which is closely related to Fourier restriction in harmonic analysis. 

With the help of this $L^2$-method, the best currently known dimensional threshold to ensure $|\Delta_x(E)|>0$ for some $x\in E$, as well as the best to ensure $|\Delta(E)|>0$, is
\begin{equation}
		\label{best-known}
		\dH(E)>\begin{cases}\frac{5}{4},&d=2\ (\text{Guth, Iosevich, Ou, Wang, \cite{GIOW18}, 2018})\\1.8,&d=3\ (\text{Du, Guth, Ou, Wang, Wilson, Zhang, \cite{DGOWWZ18}, 2018})\\\frac{d}{2}+\frac{1}{4}+\frac{1}{8d-4},&d\geq 4\ (\text{Du, Zhang, \cite{DZ18}, 2018})
		\end{cases}.
	\end{equation}

As a remark, Guth-Iosevich-Ou-Wang's argument in the plane is a variant of the $L^2$-method. They first decompose $\mu_E=\mu_{E, good}+\mu_{E, bad}$, then show $\mu_{E, bad}$ is negligible and $\nu_{x, good}:=d^x_*(\mu_{E, good})$ is in $L^2$. They also gave examples to show that if one only works on the $L^2$-norm of $\nu_x$, then no result better than $\dH(E)>\frac{4}{3}$ could be obtained. We remind the reader that $\dH(E)>\frac{4}{3}$ is the previous record in the plane, followed by \eqref{L2-id-mu} and a spherical averaging estimate of Wolff \cite{Wol99}. We will discuss more about Guth-Iosevich-Ou-Wang's argument in Section \ref{prelim}.

\subsection{Dimension of (pinned) distance sets}
Another direction to study (pinned) distance problem is, given $E\subset\R^d$, $\dH(E)>\frac{d}{2}$, one can investigate the dimension of $\Delta_x(E)$, $\Delta(E)$. There is a natural way to apply the $L^2$-method to dimension of (pinned) distance sets. To show $\dH(\Delta_x(E))\geq \tau$ (similarly for $\dH(\Delta(E))$), it suffices to show the $\tau$-energy integral of $\nu_x$,
$$I_\tau(\nu_x) := \iint |t-t'|^{-\tau}\,d\nu_x(t)\,d\nu_x(t')=C\int|\hat{\nu_x}(\xi)|^2\,|\xi|^{-1+\tau}\,d\xi= C\,||\nu_x||^2_{L^2_{\frac{-1+\tau}{2}}}$$
is finite (see, for example, \cite{Mat15}, Theorem 3.10 for the expression of the energy integral using Fourier transform). If one studies the $L^2$-norm of $\nu_x$ via \eqref{L2-id-mu} and harmonic analysis, then arguments also work on the $L^2_{\frac{-1+\tau}{2}}$-norm of $\nu_x$. In dimension $3$ and higher, this method is still the best. In the plane, better results follow from investigating coverings and local structure of sets in different scales. The first result is due to Bourgain \cite{Bou03}, who found an absolute $\epsilon_0>0$ such that $\dH(\Delta(E))\geq\frac{1}{2}+\epsilon_0$ whenever $\dH(E)\geq 1$. The best currently known results are due to Keleti and Shmerkin \cite{KS18}, who proved
\begin{itemize} 
	\item given $E\subset\R^2$, $\dH(E)\in(1,\frac{4}{3})$, then
	\begin{equation}\label{KS-distance}\dH(\Delta(E))\geq\dH(E)\frac{147-170\dH(E)+60\dH(E)^2}{18(12-14\dH(E)+5\dH(E)^2)}\geq \frac{37}{54}=0.685\cdots;\end{equation}
	\item given $E\subset\R^2$, $\dH(E)>1$, then there exists $x\in E$ such that\begin{equation}\label{KS-pinned}\dH(\Delta_x(E))\geq\min\left(\frac{2}{3}\dH(E), 1\right).\end{equation}
\end{itemize}

\subsection{A question raised by Guth, Iosevich, Ou and Wang}
As we remarked right after \eqref{best-known}, authors in \cite{GIOW18} decompose $\mu_E=\mu_{E, good}+\mu_{E, bad}$ and consider the $L^2$-norm of $\nu_{x, good}:=d^x_*(\mu_{E,good})$. It is pointed out in the Appendix of \cite{GIOW18} that neither $\nu_{x, good}$ is supported on $\Delta_x(E)$, nor $\nu_{x, bad}$ is negligible on energy integrals. Therefore, although good estimates on $I_\tau(\nu_{x, good})$ still follow naturally, it does not imply any result on $\dH(\Delta_x(E))$. If it had worked, it would follow that for any compact $E\subset\R^2$, $\dH(E)>1$, we have
\begin{equation}\label{goal}\dH(\Delta_x(E))\geq \min\left\{\frac{4}{3}\dH(E)-\frac{2}{3},1\right\} \end{equation}
for some $x\in E$, which improves \eqref{KS-pinned} when $\dH(E)>1$, and in particular improves \eqref{KS-distance} when $\dH(E)>1.028\cdots$.

Therefore it is reasonable to expect \eqref{goal} to hold. In this paper we give a positive answer to this expectation.
\begin{thm}\label{Dimension-Pin}
	Given any compact set $E\subset\R^2$, $\dH(E)>1$ and $\tau\in (0 ,1)$, then
	$$\dH\left\{x\in \R^2: \dH(\Delta_x(E))<\tau \right\}\leq \max\left\{2+3\tau-3\dH(E), 2-\dH(E)\right\}. $$
	In particular, for any compact set $E\subset\R^2$, $\dH(E)>1$, there exists $x\in E$ such that
	$$\dH(\Delta_x(E))\geq \min\left\{\frac{4}{3}\dH(E)-\frac{2}{3}, 1\right\}. $$
\end{thm}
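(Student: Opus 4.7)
My plan is to run the $L^2$-method built on identity \eqref{L2-id} against a \emph{pair} of Frostman measures---one on $E$, one on the alleged exceptional set---in order to bound the exceptional-set dimension directly, in the spirit of Mattila's exceptional-set estimates for orthogonal projections. Fix a Frostman measure $\mu_1$ on $E$ of exponent $s_1<\dH(E)$ and let $\nu_x=d^x_*(\mu_1)$. A fractional-weight variant of \eqref{L2-id} (Plancherel together with a fractional radial weight) should yield
$$I_\tau(\nu_x) \;\lesssim\; 1 + \int_1^{\infty} |\mu_1 * \widehat{\omega_r}(x)|^2 \, r^{1+\tau}\, dr.$$
If the right-hand side integrates finitely against a Frostman measure $\mu_2$ of exponent $s_2$ supported on $\{x:\dH(\Delta_x(E))<\tau\}$, then $I_\tau(\nu_x)<\infty$ for $\mu_2$-a.e.\ $x$, forcing $\dH(\Delta_x(E))\ge\tau$ and contradicting the choice of $\mu_2$. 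Hence the whole theorem reduces to showing
$$\iint_1^{\infty} |\mu_1 * \widehat{\omega_r}(x)|^2\, r^{1+\tau}\, dr\, d\mu_2(x) \;<\; \infty$$
whenever $s_2>\max\{2+3\tau-3s_1,\,2-s_1\}$; sending $s_1\nearrow\dH(E)$ gives the first inequality, and taking $\tau$ slightly below $\tfrac{4}{3}\dH(E)-\tfrac{2}{3}$ yields an exceptional set of dimension strictly below $\dH(E)$, hence not covering $E$.

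To prove the double-integral bound I would follow Guth-Iosevich-Ou-Wang: at each dyadic frequency $R\sim r$ decompose $\mu_1=\mu_{1,g}^R+\mu_{1,b}^R$, where $\mu_{1,b}^R$ captures the portion concentrated in $R^{-1/2}$-wide tubes. Expanding $|\mu_{1,g}^R * \widehat{\omega_R}(x)|^2$ on the Fourier side produces a double integral over $RS^1\times RS^1$ of $\widehat{\mu_{1,g}^R}(\xi)\overline{\widehat{\mu_{1,g}^R}(\eta)}\widehat{\mu_2}(\eta-\xi)$; bounding this via the GIOW refinement of Wolff's spherical $L^2$ estimate for $\mu_{1,g}^R$, together with a Mattila-type estimate on circular averages of $\widehat{\mu_2}$ coming from the $s_2$-Frostman condition, should deliver the exponent $2+3\tau-3s_1$. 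The weaker alternative $2-s_1$ comes from using only the trivial Sjölin-Soria spherical decay for $\widehat{\mu_1}$ together with $|\widehat{\mu_2}|\le 1$, and dominates in the small-$\tau$ regime.

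The main obstacle, as explicitly flagged in the appendix of \cite{GIOW18}, is the bad part $\mu_{1,b}^R$: although negligible in the $L^2$-norm of $\nu_x$, it is \emph{not} negligible in the $\tau$-energy $I_\tau(\nu_x)$. The crucial new ingredient available in the two-measure setup is the Frostman condition on $\mu_2$ itself: a $1\times R^{-1/2}$ tube meets $\supp\mu_2$ in a set of $\mu_2$-mass $\lesssim R^{-s_2/2}$, so the bad-part contribution can be reorganized as an integral over (tube direction, transverse offset, pinning point) that is simultaneously controlled by the Frostman conditions on $\mu_1$ (tube content coming from $E$) and on $\mu_2$ (pinning content). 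Making this transversality argument quantitative so that it absorbs exactly the $R$-powers arising in the good-part bound---essentially trading tube concentration of $\mu_1$ for Frostman decay of $\mu_2$, without losing any exponent---is the delicate heart of the proof and the step where the factor $3$ in $2+3\tau-3\dH(E)$ is produced.
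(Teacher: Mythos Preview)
Your plan has a genuine gap, and it is precisely the step you label ``the delicate heart of the proof'': controlling the bad part $\mu_{1,b}^R$ inside the $\tau$-energy. The paper does \emph{not} solve this; instead its main idea is to sidestep the energy integral $I_\tau(\nu_x)$ altogether. You are correctly aware (and the appendix of \cite{GIOW18} confirms) that $d^x_*(\mu_{1,b})$ is not negligible in $I_\tau$. Your proposed rescue---using that $\mu_2$-mass of a $1\times R^{-1/2}$ tube is small---does not obviously help: the existing GIOW bad-part bound (their Proposition~2.1) already exploits the Frostman condition on $\mu_2$ through Orponen's radial projection theorem, and what it delivers is only an $L^1$ bound $\int|d^x_*(\mu_{1,b})(t)|\,dt\lesssim R_0^{-c\delta}$ for $\mu_2$-a.e.\ $x$. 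Squeezing an additional $L^2$-type (let alone weighted $L^2$) saving out of $\mu_2$ would require a genuinely new estimate that you have not supplied.

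The paper's route is different in kind. Rather than proving $I_\tau(\nu_x)<\infty$, it uses a covering criterion (Lemma~\ref{strategy}): $\dH(\Delta_x(E))\ge\tau$ follows once, for every union $D_k$ of at most $2^{k\tau}$ intervals of length $\approx 2^{-k}$, one has $\nu_x(D_k)<2^{-k\beta}$ for all large $k$. At scale $2^{-k}$ one mollifies $\mu_E$ to $\mu_E^{2^{-k}}$, applies the GIOW decomposition, and splits
\[
\nu_x(D_k)\ \lesssim\ \int_{\widetilde{D_k}}\bigl|d^x_*(\mu_{E,\mathrm{good}}^{2^{-k}})\bigr|\,dt\ +\ \int\bigl|d^x_*(\mu_E^{2^{-k}})-d^x_*(\mu_{E,\mathrm{good}}^{2^{-k}})\bigr|\,dt.
\]
The second term is exactly the $L^1$ quantity GIOW already controls; the first is handled by Cauchy--Schwarz, picking up the factor $|\widetilde{D_k}|^{1/2}\lesssim 2^{-k(1-\tau)/2}$, and then the $L^2$ bound \eqref{prop-2.2} on the good part. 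Summing over $k$ and Borel--Cantelli finish. The crucial point is that this criterion \emph{downgrades} what is required of the bad part to an $L^1$ statement, matching what GIOW actually proves, rather than trying to upgrade the bad-part control to the $L^2_\tau$ level your plan needs.
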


\begin{remark}
	Shortly after this paper was made public, Shmerkin \cite{Shm18} plug Guth-Iosevich-Ou-Wang's estimate \cite{GIOW18} into Keleti-Shmerkin's framework \cite{KS18} and obtained
	$$\dH(\Delta(E))\geq \frac{40}{57}=0.702\cdots;$$ $$\dH(\Delta_x(E))>\frac{29}{42}=0690\cdots, \text{for some }x\in E, $$
	given $E\subset\R^2$, $\dH(E)>1$. This is better than Theorem \ref{Dimension-Pin} when $\dH(E)$ is very close to $1$.
\end{remark}

{\bf Notation.} $X\lesssim Y$ means $X\leq CY$ for some constant $C>0$. $X\approx Y$ means $X\lesssim Y$ and $Y\lesssim X$. $X\lesssim_\epsilon Y$ means $X\leq C_\epsilon Y$ for some constant $C_\epsilon>0$, depending on $\epsilon$.


Let $\phi\in\ C_0^\infty(\R^d)$, $\phi\geq 0$, $\int \phi =1$, and $\phi\geq 1$ on $B(0,\frac{1}{2})$. Denote $\phi_\delta(\cdot)=\frac{1}{\delta^d}\phi(\frac{\cdot}{\delta})$ and $\mu^{\delta}=\mu*\phi_\delta$ for any Radon measure $\mu$ on $\R^d$.

$RapDec(R)$ means for any $N>0$, there exists a constant $C_N>0$ such that $RapDec(R)\leq C_N R^{-N}$.

Denote $d\omega_r$ as the normalized surface measure on $rS^{d-1}$. Also denote $d\omega=d\omega_1$.

$\widehat{f}(\xi):=\int e^{-2\pi i x\cdot\xi} f(x)\,dx$ denotes the Fourier transform.

Denote $d^x(y)=|x-y|$, the Euclidean distance between $x$ and $y$.

\section{Review of Guth-Iosevich-Ou-Wang's argument}\label{prelim}
Let $E\subset\R^d$ be a compact set. It is well known that for any $s_E<\dH(E)$, there exists a probability measure $\mu_E$ on $E$, called a Frostman measure, such that
\begin{equation}\label{Frostman}\mu_E(B(x,r))\lesssim r^{s_E},\ \forall\ x\in\R^d,\ \forall \ r>0. \end{equation}
For any $s<s_E$, the $s$-energy integral of $\mu_E$,
\begin{equation}\label{energy-integral}I_{s}(\mu_E):=\iint |x-y|^{-s}\,d\mu_E(x)\,d\mu_E(y)= C \int |\widehat{\mu_E}(\xi)|^2\,|\xi|^{-d+s}\,d\xi,\end{equation}
is finite. For more details, see, for example, \cite{Mat15}, Section 2.5.

Suppose $E_1, E_2\subset\R^2$ are compact, $dist(E_1, E_2)>0$, and $\mu_1$, $\mu_2$ are Frostman measures on $E_1$, $E_2$ satisfying \eqref{Frostman}, with $s_1=s_2=\alpha$. In \cite{GIOW18}, it is proved that there is a decomposition $\mu_1=\mu_{1, good}+\mu_{1, bad}$, where $\mu_{1, good}$, $\mu_{1, bad}$ are complex-valued distributions, such that

\begin{itemize} \item (Proposition 2.1, \cite{GIOW18}) if $\alpha>1$, then there exists $E_2'\subset E_2$, $\mu_2(E_2')\geq 1-\frac{1}{1000}$, such that for each $x\in E_2'$,
	\begin{equation}\label{prop-2.1-GIOW}\int |d^x_*(\mu_1)(t) - d^x_*(\mu_{1, good})(t) |\,dt<\frac{1}{1000};\end{equation}
	\item (Proposition 2.2, \cite{GIOW18}) if $\alpha>\frac{5}{4}$, then \begin{equation}\label{prop-2.2-GIOW}\iint |d^x_*(\mu_{1, good})(t)|^2\,dt\,d\mu_2(x)<\infty.\end{equation}
\end{itemize}
As a consequence, $\Delta_x(E)$ has positive Lebesgue measure for some $x\in E$ whenever $\dH(E)>\frac{5}{4}$ (see \cite{GIOW18}, Section $2$). 

In fact what is proved in \cite{GIOW18} is the following quantitative version. Throughout this paper $RapDec(R)$ means for any $N>0$, there exists a constant $C_N>0$ such that $RapDec(R)\leq C_N R^{-N}$.
\begin{prop}\label{what-proved}
With notation above, there exists a constant $c=c(\alpha)>0$ such that for any $R_0\gg 1$ and $1\gg\delta>0$, one can decompose $\mu_1=\mu_{1, good}+\mu_{1, bad}$, where $\mu_{1, good}$, $\mu_{1, bad}$ are complex-valued distributions, such that
\begin{itemize} \item if $\alpha>1$, then there exists $E_2'\subset E_2$, $\mu_2(E_2')\geq 1-R_0^{-c\delta}$, such that for each $x\in E_2'$,
	\begin{equation}\label{prop-2.1-proved}\int |d^x_*(\mu_1)(t) - d^x_*(\mu_{1, good})(t) |\,dt\lesssim R_0^{-c\delta};\end{equation}
	\item there exists a constant $C(\delta)>0$ such that \begin{equation}\label{prop-2.2-proved}\iint |d^x_*(\mu_{1, good})(t)|^2\,dt\,d\mu_2(x)\lesssim C(\delta)R_0^{O(1)}\int |\widehat{\mu_1}(\xi)|^2\,|\xi|^{-\frac{\alpha+1}{3}+ O(\delta)}\,d\xi+RapDec(R_0).\end{equation}
\end{itemize}
Here all implicit constants may depend on $dist(E_1, E_2)$, $\alpha$, and the implicit constant in \eqref{Frostman}.
\end{prop}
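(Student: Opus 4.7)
The plan is to follow the Guth-Iosevich-Ou-Wang argument in \cite{GIOW18} essentially verbatim, making the dependence on the two parameters $R_0$ and $\delta$ explicit at every step so that \eqref{prop-2.1-proved} and \eqref{prop-2.2-proved} come out in the stated quantitative form. Starting from the $L^2$ identity \eqref{L2-id}, I would reduce matters to controlling $\mu_1 * \widehat{\omega_r}(x)$ integrated against $d\mu_2(x)\,r\,dr$, and dyadically decompose in the scale $r$, truncating at $r \leq R_{\max}:=R_0^{A/\delta}$ for a suitable $A=A(\alpha)$. The tail contributes $RapDec(R_0)$ via the standard Schwartz decay of the cutoffs used in the wave packet decomposition described below.

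At each dyadic scale $R$, perform a wave packet decomposition of $\mu_1 * \widehat{\omega_R}$, which is naturally frequency-localized to $|\xi|\sim R$, into packets $\psi_T$ associated to $R \times R^{1/2}$ tubes $T$, each carrying a direction $\theta_T \in S^1$ normal to $T$. Following \cite{GIOW18}, a tube is declared \emph{bad} at a reference point $x$ if the tubes through $x$ concentrate, at the angular scale $K = R^{\delta}$, in $O(1)$ caps (the narrow case); otherwise it is \emph{good}. Assembling wave packets across all scales defines the complex-valued distributions $\mu_1 = \mu_{1,good} + \mu_{1,bad}$.

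For \eqref{prop-2.1-proved}, the narrow configuration at $x$ corresponds to $\mu_1$-mass concentrated in a thin angular sector based at $x$ of opening $K^{-1}$. The Frostman bound $\mu_1(B(y,r)) \lesssim r^{\alpha}$ with $\alpha > 1$ forces such a sector to carry at most $\lesssim R^{-c\delta}$ of the mass, except on a set of $x$'s of $\mu_2$-measure $\lesssim R^{-c\delta}$ (the exceptional points being where many of these thin sectors overlap; this is where the assumption $dist(E_1,E_2)>0$ is used to give a uniform angular resolution). Summing the geometric series in $R \leq R_{\max}$ and choosing $c = c(\alpha) > 0$ small enough isolates one set $E_2'$ on which \eqref{prop-2.1-proved} holds.

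The main obstacle is \eqref{prop-2.2-proved} on the good part, which is the heart of \cite{GIOW18} and rests on the refined decoupling of Bourgain-Demeter combined with the polynomial partitioning / broad-narrow induction. At each scale $R$ this produces an estimate of the form
$$\iint_{r \sim R} |\mu_{1,good} * \widehat{\omega_r}(x)|^2 \, r \, dr \, d\mu_2(x) \lesssim C(\delta) \, R^{O(\delta)} R^{-\frac{\alpha+1}{3}} \int_{|\xi|\sim R} |\widehat{\mu_1}(\xi)|^2 \, d\xi,$$
the exponent $(\alpha+1)/3$ being produced by balancing the $L^2$-decoupling loss $R^{1/2}$ against the Frostman bound on the number of $\mu_2$-heavy wave packets at scale $R$. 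Summing over the $O((\log R_0)/\delta)$ dyadic scales absorbs into the $R_0^{O(1)}$ prefactor. The real difficulty is the quantitative bookkeeping through the induction on scales inside the polynomial partitioning step: the broad-narrow threshold $K = R^{\delta}$, the partitioning degree, the per-iteration loss and the two-ends reduction must all be tracked simultaneously, and this is precisely why the final constant $C(\delta)$ is only guaranteed to grow at most doubly exponentially in $1/\delta$, rather than polynomially.
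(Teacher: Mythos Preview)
The paper does not give an independent proof of this proposition: it simply observes that \eqref{prop-2.1-proved} and \eqref{prop-2.2-proved} are exactly what is established inside the proofs of Propositions~2.1 and~2.2 of \cite{GIOW18} (pointing specifically to Lemma~3.6 and to (5.20)/Proposition~5.3 there), before a large $R_0$ is fixed to deduce the qualitative statements \eqref{prop-2.1-GIOW}, \eqref{prop-2.2-GIOW}. Your plan to rerun the GIOW argument with the $R_0,\delta$ bookkeeping made explicit is therefore in the same spirit, and your outline of \eqref{prop-2.2-proved} via wave packets and refined decoupling is broadly accurate.

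There is, however, a genuine gap in your treatment of \eqref{prop-2.1-proved}. You claim that the Frostman condition $\mu_1(B(y,r))\lesssim r^{\alpha}$ with $\alpha>1$ by itself forces a thin angular sector of opening $R^{-\delta}$ at $x$ to carry $\mu_1$-mass $\lesssim R^{-c\delta}$, for $\mu_2$-most $x$. This is false as stated: an $\alpha$-Frostman measure can be entirely supported on a line segment, hence fully concentrated in an arbitrarily thin sector seen from suitable $x$. What is actually used in \cite{GIOW18} (and what the present paper explicitly identifies as the place where $\alpha>1$ enters) is Orponen's radial projection theorem \cite{Orp19}, namely the $L^p$ bound
\[
\int \|(\pi_x)_*\mu_1\|_{L^p(S^1)}^p \, d\mu_2(x) < \infty
\]
for some $p>1$, valid when $\alpha>1$ (and more generally when $s_{E}>1$, $s_E+s_F>2$). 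It is this estimate, not the raw Frostman bound, that controls the angular concentration of $\mu_1$ from $\mu_2$-typical basepoints and yields the $R_0^{-c\delta}$ saving for the bad piece. Without invoking Orponen's theorem your argument for \eqref{prop-2.1-proved} does not go through.
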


To see why \eqref{prop-2.1-proved} holds, we refer to Section 3 in \cite{GIOW18} (see the last line in {\it Proof of Proposition 2.1 using Lemma 3.6}). In fact authors in \cite{GIOW18} proved \eqref{prop-2.1-proved} first and then choose a large $R_0$ to obtain \eqref{prop-2.1-GIOW}. 

To see why \eqref{prop-2.2-proved} holds, we refer to Section 5 in \cite{GIOW18} (see (5.20) and Proposition 5.3). We point out that in their Proposition 5.3 $\epsilon=O(\delta)$ (see the equation before (5.22) in \cite{GIOW18}), and $C(R_0)=R_0^{O(1)}$. The error term $RapDec(R_0)$ appears when applying the identity \eqref{L2-id} with $\mu_{1, good}$ in place of $\mu_1$ (see the discussion before Lemma 5.2 in \cite{GIOW18}). In \cite{GIOW18} $R_0$ is a fixed large constant, while we shall choose different $R_0$ in different scales so rapid decay is very important.

In this paper we need the following version of Proposition \ref{what-proved}. Although it is not written explicitly in \cite{GIOW18}, it follows from the proof. Denote $\mu_E^{2^{-k}}=\mu_E*\phi_{2^{-k}}$, where $\phi\in\ C_0^\infty(\R^2)$, $\phi\geq 0$, $\int \phi =1$, $\phi\geq 1$ on $B(0,\frac{1}{2})$, and $\phi_{2^{-k}}(\cdot)=2^{2k}\phi(2^k\cdot)$.

\begin{prop}\label{what-needed}
Suppose $E, F$ are compact sets in the plane, $\dH(E)>1$, $\dH(E)+\dH(F)>2$, $dist(E, F)\gtrsim 1$, and $\mu_E$, $\mu_F$ are Frostman measures on $E, F$ satisfying \eqref{Frostman}. Then there exists a constant $c=c(s_E, s_F)>0$ such that for any $1\gg\delta>0$, $2^k\gg R_0\gg 1$, one can decompose $\mu^{2^{-k}}_E=\mu^{2^{-k}}_{E, good}+\mu^{2^{-k}}_{E, bad}$ , where $\mu^{2^{-k}}_{E, good}$, $\mu^{2^{-k}}_{E, bad}$ are complex-valued Schwartz functions, 
 such that
\begin{itemize} \item if $1<s_E<\dH(E)$, $2-s_E<s_F<\dH(F)$, then there exists $F'\subset F$, $\mu_F(F')\geq 1-R_0^{-c\delta}$, and for each $x\in F'$,
	\begin{equation}\label{prop-2.1}\int |d^x_*(\mu^{2^{-k}}_E)(t) - d^x_*(\mu^{2^{-k}}_{E, good})(t) |\,dt\lesssim R_0^{-c\delta};\end{equation}
	\item there exists a constant $C(\delta)>0$ such that \begin{equation}\label{prop-2.2}\iint |d^x_*(\mu^{2^{-k}}_{E, good})(t)|^2\,dt\,d\mu_F(x)\lesssim C(\delta) R_0^{O(1)}\int |\widehat{\mu^{2^{-k}}_E}(\xi)|^2\,|\xi|^{-\frac{s_F+1}{3}+ O(\delta)}d\xi+RapDec(R_0).\end{equation}
\end{itemize}
Here all implicit constants may depend on $dist(E, F)$, $s_E, s_F$, and the implicit constant in \eqref{Frostman}, but are independent in $k$.
\end{prop}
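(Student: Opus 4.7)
The plan is to follow the argument of \cite{GIOW18} essentially verbatim, with two modifications: tracking the Frostman exponents $s_E$ and $s_F$ separately at every step, and verifying that the estimates are uniform in the mollification scale $k$ (so long as $2^k \gg R_0$). Since $\widehat{\mu_E^{2^{-k}}}(\xi) = \widehat{\mu_E}(\xi)\,\widehat{\phi}(2^{-k}\xi)$ agrees with $\widehat{\mu_E}(\xi)$ up to a factor close to $1$ on the frequency range $|\xi|\lesssim R_0$ relevant to the GIOW wave packet decomposition, the decomposition of $\mu_E^{2^{-k}}$ can be built by the same recipe as in Section~3 of \cite{GIOW18}, with the tail $|\xi|\gtrsim 2^k$ absorbed into a $RapDec(R_0)$-type error. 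This produces complex-valued Schwartz functions $\mu^{2^{-k}}_{E,good}$ and $\mu^{2^{-k}}_{E,bad}$ whose construction is independent of $k$ on the relevant frequency scales.

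For \eqref{prop-2.1}, I would invoke the tube-concentration estimates of \cite{GIOW18} (their Lemma~3.6 and the proof of Proposition~2.1). The condition $s_E>1$ is what is used for the internal wave packet argument: on the scale $R_0^{-1}$ the $s_E$-Frostman property of $\mu_E$ forces the total $\mu_E$-mass in the set of bad tubes to decay like $R_0^{-c(s_E-1)\delta}$. To pass from this to \eqref{prop-2.1}, one integrates $d^x_*(\mu^{2^{-k}}_{E,bad})(t)$ against $dt$ for $x$ ranging over $F$: for a single bad tube $T$ of dimensions $R_0^{-1}\times 1$, the pushforward $d^x_*(\mu_E\mathds{1}_T)$ lives in an annular slab of width $\lesssim R_0^{-1}$, and averaging over $\mu_F$ using $\mu_F(B(x,r))\lesssim r^{s_F}$ produces an extra saving governed by how transversely the distance function meets $T$. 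The net power of $R_0$ obtained is exactly $R_0^{-c(s_E+s_F-2)\delta}$, which is nontrivial precisely under the hypothesis $s_F>2-s_E$. Removing a $\mu_F$-subset of measure $\leq R_0^{-c\delta}$ via Chebyshev yields the set $F'$ and the bound \eqref{prop-2.1}.

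For \eqref{prop-2.2}, I would follow Section~5 of \cite{GIOW18}, reducing the $L^2$ norm of $d^x_*(\mu^{2^{-k}}_{E,good})$ via the identity \eqref{L2-id} (applied to the Schwartz function $\mu^{2^{-k}}_{E,good}$, which introduces the $RapDec(R_0)$ term coming from the discrepancy between $\mu^{2^{-k}}_{E,good}$ and the full $\mu^{2^{-k}}_E$ in the identity). The remaining term is then estimated by the bilinear cone restriction/broad-narrow argument of Guth adapted in \cite{GIOW18}. The role of $s_F$ is as the Frostman exponent on the transverse variable $x$: it is what appears in the $\mu_F$-integration in Proposition~5.3 of \cite{GIOW18}, and tracking this substitution (in place of their single exponent $\alpha$) produces the exponent $-\frac{s_F+1}{3}+O(\delta)$ in \eqref{prop-2.2}. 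The prefactor $C(\delta)R_0^{O(1)}$ absorbs the $\epsilon=O(\delta)$ losses in the bilinear estimate and the polynomial losses in the pigeonholing at scale $R_0$, both of which are bounded uniformly in $k$.

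The main obstacle is the bookkeeping: in \cite{GIOW18} both measures carry the same exponent $\alpha$, and one must go through each step deciding whether the exponent used there records the spatial concentration of $\mu_E$ (becoming $s_E$) or the transverse concentration of $\mu_F$ (becoming $s_F$). Once this is done, the hypotheses $s_E>1$ and $s_E+s_F>2$ emerge as the minimal requirements for \eqref{prop-2.1}, while \eqref{prop-2.2} has no lower bound on $s_E+s_F$ at all (the burden is shifted to the $|\xi|^{-\frac{s_F+1}{3}+O(\delta)}$ factor, whose integrability against $|\widehat{\mu^{2^{-k}}_E}(\xi)|^2$ will be controlled later by choosing $s_E$ close to $\dH(E)$). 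Uniformity in $k$ is not a serious issue since all Fourier localizations occur at scales $\ll 2^k$, leaving only Schwartz tails that the $RapDec(R_0)$ notation is designed to absorb.
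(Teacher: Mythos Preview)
Your high-level plan---rerun the GIOW argument while tracking $s_E$ and $s_F$ separately and checking uniformity in $k$---is exactly the paper's approach. But your account of \eqref{prop-2.1} misidentifies the mechanism, and the mechanism is what carries the hypotheses. You write that $s_E>1$ enters because ``the $s_E$-Frostman property of $\mu_E$ forces the total $\mu_E$-mass in the set of bad tubes to decay like $R_0^{-c(s_E-1)\delta}$,'' and that $s_F>2-s_E$ enters separately by ``averaging over $\mu_F$.'' This is not how Lemma~3.6 of \cite{GIOW18} works: bad tubes are \emph{defined} as those carrying large $\mu_E$-mass, so their total $\mu_E$-mass need not be small at all. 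What is actually controlled is, for $\mu_F$-generic $x$, the number of bad \emph{directions} seen from $x$, and this comes from Orponen's radial projection theorem \cite{Orp19}: the finiteness of $\int \|(\pi_x)_*\mu_E\|_{L^p(S^1)}^p\,d\mu_F(x)$ requires $s_E>1$ and $s_E+s_F>2$ \emph{simultaneously} as hypotheses of a single nontrivial input, not as two independent savings to be multiplied. If you tried to carry out the annular-slab averaging you describe instead of invoking Orponen, the argument would not close.

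The paper's route to uniformity in $k$ is also simpler than yours. For \eqref{prop-2.1}, the only $\mu_E$-dependent constant in Orponen's estimate \eqref{Orponen} is an energy integral, and $I_s(\mu_E^{2^{-k}})\le I_s(\mu_E)$ trivially since $|\widehat{\phi}(2^{-k}\xi)|\le 1$; no frequency-localization argument is needed. For \eqref{prop-2.2}, the paper observes that the proof of \eqref{prop-2.2-proved} in Section~5 of \cite{GIOW18} never uses the Frostman condition on $\mu_1$ at all---it holds for any compactly supported finite measure with support disjoint from $F$---so $\mu_E^{2^{-k}}$ may be substituted directly, with no appeal to where Fourier localizations occur.
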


Now we explain why Proposition \ref{what-needed} holds. 

In the proof of \eqref{prop-2.1-proved}, one needs $\alpha>1$ only when applying Orponen's radial projection theorem \cite{Orp19}, which still holds given $s_E>1$, $s_E+s_F>2$. 

Now it remains to explain why $\mu_E$ can be replaced by $\mu_E^{2^{-k}}$, $2^k\gg R_0\gg 1$. For \eqref{prop-2.1}, recall in the proof of \eqref{prop-2.1-proved} in \cite{GIOW18} (see the last line of the proof of Lemma 3.6 in \cite{GIOW18}), the implicit constant in \eqref{prop-2.1-proved} comes from the implicit constant in the following radial projection estimate due to Orponen. Denote $\pi_x:\R^2\backslash\{x\}\rightarrow S^1$ as the radial projection
$$\pi_x(y)=\frac{y-x}{|y-x|}. $$
Orponen (see \cite{Orp19}, (3.6)) proved that, if $s_E>1$, $s_E+s_F>2$, then when $p>1$ is small enough,
\begin{equation}\label{Orponen}\int ||(\pi_x)_*\mu_E||_{L^p(S^1)}^p\,d\mu_F(x)\lesssim_{\epsilon, p, s_E, s_F} I_{s_F-\epsilon}(\mu_F)^\frac{1}{2p} I_{s_E-\epsilon}(\mu_E)^\frac{1}{2}<\infty. \end{equation}
When we replace $\mu_E$ by $\mu_E^{2^{-k}}$,
$$I_{s}(\mu^{2^{-k}}_E)=C\int |\widehat{\mu_E}(\xi)|^2\,|\hat{\phi}(2^{-k}\xi)|^2\,|\xi|^{-d+s}\,d\xi\leq C\int |\widehat{\mu_E}(\xi)|^2\,|\xi|^{-d+s}\,d\xi = I_s(\mu_E),$$
so the upper bound of \eqref{Orponen} still holds uniformly in $k$. Hence \eqref{prop-2.1} holds uniformly in $k$.

For \eqref{prop-2.2}, we refer to Section 5 in \cite{GIOW18}. One can see that, the Frostman condition \eqref{Frostman} on $\mu_1$ is not used in the proof of \eqref{prop-2.2-proved}. It comes in only for the finiteness of \eqref{prop-2.2-proved} to obtain \eqref{prop-2.2-GIOW}. Therefore \eqref{prop-2.2-proved} remains valid when $\mu_1$ is replaced by any compactly supported finite measure whose support does not intersect $E_2$. Hence \eqref{prop-2.2} follows.

\section{Dimension of pinned distance sets}
We shall use the following criteria to determine dimension of pinned distance sets. This idea is now standard in geometric measure theory.
\begin{lem}\label{strategy}
Given a compact set $E\subset\R^d$, $x\in\R^d$ and a probability measure $\mu_E$ on $E$. Suppose there exist $\tau\in (0,1]$, $K\in\Z_+$, $\beta>0$ such that
	$$\mu_E(\{y: |y-x|\in D_k\})<2^{-k\beta}$$ 
	for any
	$$D_k=\bigcup_{j=1}^M I_j,$$
	where $k>K$, $M\leq 2^{k\tau}$ are arbitrary integers and each $I_j$ is an arbitrary interval of length $\approx 2^{-k}$. Then
	$$\dH(\Delta_x(E))\geq\tau. $$
\end{lem}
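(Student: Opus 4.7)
The plan is to translate the hypothesis into a statement about the pushforward probability measure $\nu_x := d^x_*\mu_E$, which is supported in $\Delta_x(E)$ and satisfies $\nu_x(D_k) < 2^{-k\beta}$ for every union $D_k$ of $M\le 2^{k\tau}$ intervals of length $\approx 2^{-k}$ with $k>K$. To show $\dH(\Delta_x(E))\ge \tau$ it suffices to show $\mathcal{H}^\tau(\Delta_x(E)) = \infty$, i.e.\ that for every cover $\{J_i\}_i$ of $\Delta_x(E)$ by intervals of length at most $\delta$, the sum $\sum_i |J_i|^\tau$ diverges as $\delta\to 0$.

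I would fix such a cover with $\delta < 2^{-K-1}$ and group by dyadic scale, $I_k := \{i : 2^{-k-1} < |J_i|\le 2^{-k}\}$, so that $I_k=\emptyset$ for $k\le K$. At each scale, partition $I_k$ into at most $\lceil |I_k|/2^{k\tau}\rceil$ subgroups of cardinality $\le 2^{k\tau}$. Applying the hypothesis to each subgroup separately and summing gives
$$\nu_x\Bigl(\bigcup_{i\in I_k} J_i\Bigr) \;\le\; \Bigl\lceil \tfrac{|I_k|}{2^{k\tau}}\Bigr\rceil\cdot 2^{-k\beta} \;\lesssim\; |I_k|\cdot 2^{-k(\tau+\beta)} + 2^{-k\beta}.$$
Summing over $k$ and using $\nu_x(\Delta_x(E))=1$ yields
$$1 \;\le\; \sum_{k\ge k_0}\Bigl(|I_k|\cdot 2^{-k(\tau+\beta)} + 2^{-k\beta}\Bigr),$$
where $k_0\approx \log_2(1/\delta)$ is the smallest active dyadic scale.

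Since the geometric tail $\sum_{k\ge k_0} 2^{-k\beta}$ is bounded by a constant multiple of $2^{-k_0\beta}$, which is less than $1/2$ once $\delta$ is small enough, I may absorb it and conclude $\sum_{k\ge k_0} |I_k|\cdot 2^{-k(\tau+\beta)} \gtrsim 1$. Combining this with the trivial inequality $|I_k|\cdot 2^{-k\tau} \ge 2^{k_0\beta}\cdot |I_k|\cdot 2^{-k(\tau+\beta)}$, valid for every $k\ge k_0$, yields
$$\sum_i |J_i|^\tau \;\approx\; \sum_{k\ge k_0} |I_k|\cdot 2^{-k\tau} \;\gtrsim\; 2^{k_0\beta} \;\longrightarrow\; \infty$$
as $\delta\to 0$. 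Hence $\mathcal{H}^\tau(\Delta_x(E))=\infty$ and $\dH(\Delta_x(E))\ge \tau$. The argument is essentially a direct dyadic covering computation, and I do not anticipate any real obstacle: the genuine positivity of $\beta$ is precisely what permits the absorption of the two error terms (the leftover subgroup from the ceiling in the partition, and the geometric tail coming from scales just above $k_0$), so the flexibility in the hypothesis — that $D_k$ may be a \emph{union} of up to $2^{k\tau}$ intervals — is used in a completely transparent way.
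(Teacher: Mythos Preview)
Your argument is correct and follows the same dyadic-covering strategy as the paper, with one variation worth noting. The paper argues by contradiction: it picks an auxiliary exponent $s\in(\dH(\Delta_x(E)),\tau)$, takes a finite cover with $\sum_I |I|^s\le 1$, and observes that this forces $\#\mathcal{I}_k\le 2^{(k+1)s}\le 2^{k\tau}$ at every dyadic scale $k\ge N$, so the hypothesis applies directly to each $D_k$; a pigeonhole over scales then produces the contradiction. You instead work at the endpoint exponent $\tau$ and allow arbitrarily many intervals at each scale, handling this by partitioning each $I_k$ into subgroups of size $\le 2^{k\tau}$ and summing. The trade-off is that the paper's slack $s<\tau$ spares it your partitioning step, while your route is more direct and in fact yields the marginally stronger conclusion $\mathcal{H}^\tau(\Delta_x(E))=\infty$ rather than merely $\dH(\Delta_x(E))\ge\tau$.
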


We give the proof for completeness.
\begin{proof}
If
$$\dH(\Delta_x(E))<\tau, $$
there exists $s\in(\dH(\Delta_x(E)), \tau)$ such that $\mathcal{H}^{s}(\Delta_x(E))=0$. By the definition of Hausdorff measure there exists an integer $N_0>0$ such that for any integer $N>N_0$, we can find a cover $\mathcal{I}$ of $\Delta_x(E)$ which consists of finitely many open intervals of length $<2^{-N}$, such that
$$\sum_{I\in\mathcal{I}} |I|^s\leq 1. $$

Denote $$\mathcal{I}_k=\{I\in\mathcal{I}: 2^{-k-1}\leq |I|< 2^{-k}\},\  k=N, N+1,\dots$$ and $$D_k=\bigcup_{I\in \mathcal{I}_k} I.$$
We may assume $N>\frac{s}{\tau-s}$. Then
$$\#(\mathcal{I}_k)\leq 2^{(k+1)s}\leq 2^{k\tau}.$$

Since 
$$\Delta_x(E)\subset \bigcup_{k\geq N} D_k$$ and $$\mu_E\left(\left\{y\in E: |x-y|\in \Delta_x(E)\right\}\right)=\mu_E(E)=1,$$ there exists $k_0\geq 0$ such that
$$\mu_E(\{y\in E: |y-x|\in D_{k_0+N}\})>\frac{1}{100 (k_0+1)^2}. $$

On the other hand, when $N>K$ the assumption in Lemma \ref{strategy} implies
$$\mu_E(\{y\in E: |y-x|\in D_{k_0+N}\})<2^{-(N+k_0)\beta},$$
which is a contradiction when $N$ is large enough so that $2^{-N\beta}<\inf_{k\geq 0}\frac{2^{k\beta}}{100(k+1)^2}$.
\end{proof}

\section{Proof of Theorem \ref{Dimension-Pin}}
For any compact $F\subset\R^2$ with $$\dH(F)>\max\left\{2+3\tau-3\dH(E),\ 2-\dH(E)\right\},$$ 
there exist Frostman measures $\mu_E$, $\mu_F$ on $E, F$ satisfying \eqref{Frostman}, with $1<s_E<\dH(E)$, $\max\{2+3\tau-3 s_E,\ 2-s_E\}<s_F<\dH(F)$. 

We can always find compact subsets $E'\subset E$, $F'\subset F$ such that $dist(E', F')\gtrsim 1$ and $\mu_E(E'), \mu_F(F')>0$. Therefore we may assume $dist (E, F)\gtrsim 1$. 

To prove Theorem \ref{Dimension-Pin}, it suffices to show for $\mu_F$-a.e. $x\in F$, we have
$$\dH(\Delta_x(E))\geq \tau. $$

Denote
$$\mathcal{D}_k^\tau=\left\{\bigcup_{j=1}^M I_j:  M\leq 2^{k\tau},\, I_j \text{ is an open interval}, \,|I_j|\approx 2^{-k}, \,j=1,\dots,M\right\}.$$

Let $\beta>0$ be a small number that will be specified later. Denote 
$F_k$ as a subset of $F$ which consists of points $x\in F$ who admits some $D_k\in \mathcal{D}_k^\tau$ such that
$$\mu_E(\{y: |y-x|\in D_k\})\geq 2^{-k\beta}.$$

For any $D_k\in\mathcal{D}_k^\tau$, denote $\widetilde{D_k}$ as the $2^{-k}$-neighborhood of $D_k$.
\begin{claim*} For any $D_k\in\mathcal{D}_k^\tau$ and any $x\in\R^2$, we have
	$$\mu_E(\{y: |y-x|\in D_k\}) \lesssim \int_{\widetilde{D_k}} d^x_*(\mu_E^{2^{-k}})(t)\,dt.$$
\end{claim*}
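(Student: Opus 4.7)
The plan is to unfold the right-hand side using the definitions of pushforward measure and of $\mu_E^{2^{-k}}=\mu_E*\phi_{2^{-k}}$, and then exploit the pointwise lower bound on the mollifier $\phi_{2^{-k}}$ guaranteed by the standing assumption that $\phi\geq 1$ on $B(0,\tfrac12)$. First I would write
$$\int_{\widetilde{D_k}} d^x_*(\mu_E^{2^{-k}})(t)\,dt = \int \1_{\widetilde{D_k}}(|x-y|)\,\mu_E^{2^{-k}}(y)\,dy = \iint \1_{\widetilde{D_k}}(|x-y|)\,\phi_{2^{-k}}(y-z)\,dy\,d\mu_E(z)$$
after an application of Fubini.

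Next I would restrict the inner $y$-integral to the ball $B(z,2^{-k}/2)$. On this ball, the scaling $\phi_{2^{-k}}(\cdot)=2^{2k}\phi(2^k\cdot)$ combined with $\phi\geq 1$ on $B(0,\tfrac12)$ yields $\phi_{2^{-k}}(y-z)\geq 2^{2k}$. Moreover, by the reverse triangle inequality $\bigl||x-y|-|x-z|\bigr|\leq |y-z|\leq 2^{-k}/2$, so if $z$ satisfies $|x-z|\in D_k$ then $|x-y|$ lands inside the $2^{-k}$-neighborhood $\widetilde{D_k}$ of $D_k$, making $\1_{\widetilde{D_k}}(|x-y|)=1$. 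Thus, for each such $z$, the inner integral is bounded below by $2^{2k}\cdot \vol(B(z,2^{-k}/2))\gtrsim 1$.

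Finally, integrating the resulting lower bound against $d\mu_E(z)$ over $\{z:|x-z|\in D_k\}$ produces $\gtrsim \mu_E(\{y:|y-x|\in D_k\})$, which is exactly the asserted inequality. I do not anticipate any serious obstacle: the argument is a one-line matching of the scales $2^{-k}$ used in the mollification and in the definition of $\widetilde{D_k}$, together with the fact that the function $y\mapsto |x-y|$ is $1$-Lipschitz. The only subtlety worth flagging is that the thickening width $2^{-k}$ must dominate the $|y-z|\leq 2^{-k}/2$ fluctuation in $|x-y|-|x-z|$, which is precisely guaranteed by the definition of $\widetilde{D_k}$.
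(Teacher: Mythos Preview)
Your proposal is correct and is essentially the same argument as the paper's: both unfold the convolution, use the lower bound $\phi_{2^{-k}}\geq 2^{2k}$ on the ball of radius $2^{-k-1}$, invoke the $1$-Lipschitz property of $y\mapsto |x-y|$ to pass from $D_k$ to $\widetilde{D_k}$, and conclude by integrating the resulting $\gtrsim 1$ lower bound against $\mu_E$ on $\{|x-\cdot|\in D_k\}$. The only cosmetic difference is that the paper performs a change of variables $u=x-z$ before estimating the inner integral, whereas you keep the original variables.
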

\begin{proof}[Proof of Claim]
Notice the right hand side equals
\begin{equation*}\begin{aligned} \int_{|x-z|\in\widetilde{D_k}} \mu_E^{2^{-k}}(z)\,dz= &\, 2^{2k}\iint_{|x-z|\in\widetilde{D_k}} \phi(2^k(z-y))\,d\mu_E(y)\,dz\\\geq&\, 2^{2k}\iint_{|u|\in\widetilde{D_k},\, |x-y-u|\leq 2^{-k-1}}\,du\,d\mu_E(y).\end{aligned}\end{equation*}

Fix $y$ and integrate $u$ first. Since $B(x-y, 2^{-k-1})\subset \widetilde{D_k}$ if $|x-y|\in D_k$, 
this integral is
$$\gtrsim \int_{|y-x|\in D_k}\,d\mu_E(y)=\mu_E(\{y: |y-x|\in D_k\}), $$
as desired.
\end{proof}

Let $R_0 = 2^{\frac{10k\beta}{c\delta}}$, where $c$ is the constant in \eqref{prop-2.1}, and $0<\beta\ll \delta\ll 1$ will be specified later. With this choice we have $RapDec(R_0)=RapDec(2^k)$.

Split $F$ into $F'$ and $F\backslash F'$ such that \eqref{prop-2.1} holds on $F'$ and $\mu_F(F\backslash F')<R_0^{-c\delta}$. Denote $F_k'=F_k\cap F'$. Then $\mu_F(F_k\backslash F_k')<R_0^{-c\delta}\ll 2^{-k\beta}$.

With the claim above and our definition of $F_k$, it follows that
\begin{equation}\label{Lambda_k}2^{-k\beta} \mu_F(F'_k)\leq  \int_{F'_k} \left(\sup_{D_k\in\mathcal{D}_k^\tau}\int_{\widetilde{D_k}}d^x_*(\mu_E^{2^{-k}})(t)\,dt\right)d\mu_F\end{equation}
which is bounded from above by
$$\int_{F'_k} \left(\sup_{D_k\in\mathcal{D}_k^\tau}\int_{\widetilde{D_k}}|d^x_*(\mu_{E, good}^{2^{-k}})(t)|\,dt\right)d\mu_F + \int_{F'_k} \int|d^x_*(\mu_E^{2^{-k}})(t)-d^x_*(\mu_{E, good}^{2^{-k}})(t)|\,dt\,d\mu_F.$$

By \eqref{prop-2.1} the second term is $\lesssim R_0^{-c\delta}\mu_F(F'_k)\ll 2^{-k\beta}\mu_F(F'_k)$, negligible. Therefore
\begin{equation}
	\begin{aligned}
		2^{-k\beta}\mu_F(F'_k)\lesssim & \int_{F'_k} \left(\sup_{D_k\in\mathcal{D}_k^\tau}\int_{\widetilde{D_k}}d^x_*(\mu_{E, good}^{2^{-k}})(t)\,dt\right)d\mu_F\\\leq & \int_{F'_k} \sup_{D_k\in\mathcal{D}_k^\tau} \left(|\widetilde{D_k}|\cdot \int_{\widetilde{D_k}}|d^x_*(\mu_{E, good}^{2^{-k}})(t)|^2 \,dt\right)^\frac{1}{2} d\mu_F\\\leq &\left(\sup_{D_k\in\mathcal{D}_k^\tau} |\widetilde{D_k}|^\frac{1}{2}\right)\cdot\int_{F'_k}  \left(\int|d^x_*(\mu_{E, good}^{2^{-k}})(t)|^2 \,dt\right)^\frac{1}{2} d\mu_F,
	\end{aligned}
\end{equation}
where the second line follows from Cauchy-Schwarz, and the last line follows simply by dropping the integral domain $\widetilde{D_k}$.

Since every $\widetilde{D_k}$ can be covered by $\lesssim 2^{k\tau}$ intervals of length $\approx 2^{-k}$, we have
\begin{equation}\label{Cauchy-Schwarz-again}
\begin{aligned}&\ \ \ \left(2^{-k\beta}\mu_F(F'_k)\right)^2\\&\lesssim \,2^{-k(1-\tau)} \left(\int_{F'_k}  \left(\int|d^x_*(\mu_{E, good}^{2^{-k}})(t)|^2 \,dt\right)^\frac{1}{2} d\mu_F\right)^2\\&\leq\,2^{-k(1-\tau)} \mu_F(F'_k)\iint|d^x_*(\mu_{E, good}^{2^{-k}})(t)|^2 \,dt\, d\mu_F\\&\lesssim_{\delta, \beta} 2^{-k(1-\tau)} \mu_F(F'_k)\, 2^{O(1)k\beta/\delta}\!\int |\widehat{\mu^{2^{-k}}_E}(\xi)|^2|\xi|^{-\frac{s_F+1}{3}+O(\delta)}d\xi + RapDec(2^k)\mu_F(F'_k),
\end{aligned}
\end{equation}
where the third line follows from Cauchy-Schwarz and the last line follows from \eqref{prop-2.2}. Here we need $1<\beta\ll \delta\ll 1$ for $R_0\ll 2^k$ to apply \eqref{prop-2.2}.

Now we can solve for $\mu_F(F'_k)$ to obtain
\begin{equation*}\begin{aligned}\mu_F(F'_k)&\lesssim_{\delta, \beta} 2^{-k(1-\tau-O(\beta/\delta)-2\beta)}\int |\widehat{\mu_E}(\xi)|^2\,|\hat{\phi}(2^{-k}\xi)|^2\,|\xi|^{-\frac{s_F+1}{3}+O(\delta)}\,d\xi+RapDec(2^k)\\&\lesssim_{\delta, \beta} 2^{-k(1-\tau-O(\beta/\delta)-2\beta)}\int_{|\xi|\leq 2^{k/(1-\delta)}} |\widehat{\mu_E}(\xi)|^2\,|\xi|^{-\frac{s_F+1}{3}+O(\delta)}\,d\xi+RapDec(2^k).
\end{aligned}
\end{equation*}

Since $\tau<1$, we may choose $0<\beta\ll\delta\ll 1$ such that $1-\tau-O(\beta/\delta)-3\beta>0$. Then $$2^{-k(1-\tau-O(\beta/\delta)-2\beta)}\lesssim 2^{-k\beta}\cdot |\xi|^{-(1-\delta)(1-\tau-O(\beta/\delta)-3\beta)}=2^{-k\beta}\cdot |\xi|^{-1+\tau+O(\beta/\delta+\delta+\beta)}$$ in the domain $|\xi|\leq 2^{k/(1-\delta)}$, thus
$$\mu_F(F'_k)\lesssim_{\delta, \beta} 2^{-k\beta} \int |\widehat{\mu_E}(\xi)|^2\,|\xi|^{-\frac{s_F+1}{3}-1+\tau+O(\beta/\delta+\delta+\beta)}\,d\xi+RapDec(2^k).$$

Since $s_F>2+3\tau-3 s_E$, we may choose $0<\beta\ll\delta\ll 1$ such that $$-\frac{s_F+1}{3}-1+\tau+O(\beta/\delta+\delta+\beta)<-2+s_E,$$
which guarantees the energy integral
$$\int |\widehat{\mu_E}(\xi)|^2\,|\xi|^{-\frac{s_F+1}{3}-1+O(\beta/\delta+\delta+\beta)}\,d\xi$$
to be finite and therefore $\mu_F(F'_k)\lesssim_{\delta, \beta} 2^{-k\beta}$. 

Above all,
$$\sum_k\mu_F(F_k)=\sum_k\mu_F(F'_k)+\mu_F(F_k\backslash F'_k)\lesssim \sum_k2^{-k\beta}<\infty.$$
By the Borel-Cantelli Lemma, the condition in Lemma \ref{strategy} is satisfied for $\mu_F$-a.e. $x\in F$. Hence by Lemma \ref{strategy},
$$\dH(\Delta_x(E))\geq \tau$$
for $\mu_F$-a.e. $x\in F$, which completes the proof.

\bibliographystyle{abbrv}
\bibliography{/Users/MacPro/Dropbox/Academic/paper/mybibtex.bib}

\end{document}